\documentclass[10pt]{amsart}
\usepackage{amssymb,amsmath,amsthm,amsfonts,amsopn,url}
\usepackage{amscd,amssymb,amsopn,amsmath,amsthm,graphics,amsfonts,enumerate,verbatim,calc}
\usepackage[all]{xy}
\theoremstyle{plain}
\newtheorem{thm}{Theorem}[section]
\newtheorem*{mt*}{Theorem A}
\newtheorem*{mt**}{Theorem B}
\newtheorem*{cj*}{Conjecture}
\newtheorem*{nt*}{Notations}

\newtheorem{lemma}[thm]{Lemma}
\newtheorem{cor}{Corollary}
\newtheorem{rem}{Remark}
\newtheorem{example}{Example}[section]
\theoremstyle{definition}

\newcommand{\func}[1]{\mathrm{#1} \,}

\newcommand{\supp}{\func{supp}}
\newcommand{\Star}{\func{Star}}
\newcommand{\Sd}{\func{Sd}}
\newcommand{\lk}{\func{lk}}
\newcommand{\del}{\func{del}}

\newcommand{\ZZ}{{\mathbb Z}}

\title
{The second vanishing theorem in Stanley-Reisner ring with topological interpretation}

\author[]{Rajsekhar Bhattacharyya}

\address{Dinabandhu Andrews College, Garia, Kolkata 700084, India}

\email{rbhattacharyya@gmail.com}

\keywords{Local Cohomology}

\subjclass[2010]{13D45}

\keywords{Local Cohomology}

\subjclass[2010]{13D45}
\begin{document}

\begin{abstract}
For regular local ring, the ``second vanishing theorem'' or ``SVT'' of local cohomology has been proved in several cases. In this paper, we explore the result similar to that of the SVT to Stanley-Reisner ring with an interpretation from combinatorial topology. 
\end{abstract}

\maketitle

\section{Introduction}

For Noetherian ring $S$ and for an ideal $J\subset S$, we have the local cohomology module $H^i_J(S)$ supported at $J$. It is a very mysterious object, even it is quite hard to know when it will vanish. Recently there have been several instances where more explicit information on local cohomology modules was obtained in special cases using combinatorial approach and one of such place is Stanley-Reisner ring.

Recall that the cohomological dimension of an ideal $J$ of a Noetherian ring $S$ is the maximum index $i \geq0$ for which the local cohomology module $H^i_J(S)$ is nonzero. In this context we mention Hartshorne-Lichtenbaum vanishing theorem or ``HLVT'' \cite{HartshorneCD}. It states that: For any complete local domain $S$ of dimension $d$, $H^d_J(S)$ vanishes if and only if $\dim(S/J)>0$. One may regard the HLVT as the ``first vanishing theorem'' for local cohomology. In \cite{RWY}, HLVT has been extended for Stanley-Reisner ring with an interpretation from combinatorial topology.

If the ring $S$ contains a field, the ``second vanishing theorem'' or ``SVT'' of local cohomology states the following: Let $S$ be a complete regular local ring of dimension $d$ with a separably closed residue field, which it contains. Let $J\subseteq S$ be an ideal such that $\dim(S/J)\geq 2$. Then $H^{d-1}_J(S)=0$  if and only if the punctured spectrum of $S/J$ is connected \cite{HartshorneCD,H-L,Ogus,P-S}. In \cite{CohDim}, the SVT has been extended to complete unramified regular local ring of mixed characteristic. In \cite{Bh}, it has been realized in the ramified case only the for extended ideals.

In this paper, we explore the result similar to that of the SVT to Stanley-Reisner ring with a combinatorial topological interpretetion, (see Theorem 3.2. and Theorem 3.3): 


In this context it should be mentioned that first part of the proof of Lemma 3.1 goes similarly to a part of the proof of Theorem 3.5 of \cite{RWY}. 


\section{Preliminaries}

In this section, we recall some basic results from combinatorial topology. For general references we refer \cite{Bj} and \cite{Mu}.

For $[n]= \{1,2,\ldots,n\}$, let $\Delta\subset 2^{[n]}$ be a simplicial complex i.e., if $F\in \Delta$ and $G\subset F$, then $G\in \Delta$. For $\bold{a}\in \ZZ^n$, we define the following support subsets of $[n]= \{1,2,\ldots,n\}$: $\supp_{+}\bold{a}= \{i: a_i > 0\}$, $\supp_{-}\bold{a}= \{i: a_i < 0\}$ and $\supp \bold{a}= \{i: a_i \neq 0\}$. Let $S = k[x_1, \ldots, x_n]$ be a polynomial ring over a field $k$. If $F\in \Delta\subset 2^{[n]}$, then we write square-free monomial $x^{a_1}_1\ldots x^{a_n}_n= x^F$, when $F= \supp \bold{a}$.

For a simplicial complex $\Delta$, the Stanley-Reisner ideal of $\Delta$ is the square-free monomial ideal, $I_{\Delta}= \{x^F: F\notin \Delta\}$. We define the Stanley-Reisner ring as $S/I_{\Delta}= k[\Delta]$. Let $\Sigma\subset \Delta \subset 2^{[n]}$ be two simplicial complexes. Then we have Stanley-Reisner ideals $I_{\Delta}\subset I_{\Sigma}$. In $k[\Delta]$, we denote the image of $I_{\Sigma}$ by $J$. 

Given a face $F\in \Delta$, we can define three subcomplexes, called the star, deletion, and link of $F$ inside $\Delta$, as follows:
$\Star_{\Delta} (F) = \{G\in \Delta: G\cup F\in \Delta\}$, $\del_{\Delta} (F) = \{G\in \Delta: G\not\subset F\}$ and $\lk_{\Delta} (F) = \{G\in \Delta: G\cap F= \phi, G\cup F\in \Delta\}$

Now to prove the result of this paper, we need the following result, Theorem 3.2 of \cite{RWY}:

\begin{mt*}
Let $\Sigma\subset \Delta$ be simplicial complexes and let $\bold{a}\in \ZZ^{n}$, $F_{+}=\supp_{+}\bold{a}$ and $F_{-}=\supp_{-}\bold{a}$. Then $H^{i}_J (k[\Delta])_{\bold{a}}= \widetilde{H}^{i-1}(||\Star_{\Delta}(F_{+})||-||\Sigma||,||\del_{\Star_{\Delta}(F_{+})}(F_{-})||-||\Sigma||;k)$
where $\widetilde{H}^{i}(-,-;k)$ denotes $i-th$ singular relative reduced cohomology and $||\Delta||$
denotes the geometric realization of a simplicial complex $\Delta$.
\end{mt*}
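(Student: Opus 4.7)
My approach is to apply Theorem A to convert the algebraic vanishing of $H^{d-n+1}_J(k[\Delta])$ into a coordinate-wise topological statement. By that formula, $H^{d-n+1}_J(k[\Delta])=0$ if and only if
$$\tilde{H}^{d-n}\bigl(\|\star_{\Delta}(F_{+})\|-\|\Sigma\|,\ \|\del_{\star_{\Delta}(F_{+})}(F_{-})\|-\|\Sigma\|;\ k\bigr)=0$$
for every $\bold{a}\in\ZZ^n$ with $F_\pm=\supp_\pm\bold{a}$. Whenever $F_+\notin\Delta$ the star is empty and this group is trivially $0$, so only $F_+\in\Delta$ needs to be analyzed; this is what forces the combinatorial condition to quantify over faces of $\Delta$.

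For the necessity direction, I would assume the condition fails: there exist $1\leq i\leq n$ and a $(d-i)$-face $F\in\Delta$ with at most $n-i$ vertices in $V(\Sigma)$. Choose $\bold{a}$ so that $F_+=V(F)\cap V(\Sigma)$ and $F_-=V(F)\setminus F_+$; then $|F_-|\geq d-n+1$ and $F\in\star_\Delta(F_+)$. The geometric simplex $F$ then has all of $F_-$ missing from $\|\Sigma\|$, and, by the same cycle-construction used in the necessity half of the SVT (Theorem 3.2 above, which is the $n=2$ case), it contributes a top-dimensional relative cell whose boundary lies in $\|\del_{\star_{\Delta}(F_{+})}(F_{-})\|$, producing a nonzero class in $\tilde{H}^{d-n}$ of the pair and hence nonvanishing of $H^{d-n+1}_J(k[\Delta])_{\bold{a}}$.

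For the sufficiency direction, assume the combinatorial condition; equivalently, every face $G$ of $\Delta$ with $\dim G\geq d-n$ satisfies $|G\setminus V(\Sigma)|\leq d-n$. Fix $F_+\in\Delta$; the space $\|\star_\Delta(F_+)\|$ is contractible as a cone on $\|\lk_\Delta(F_+)\|$. I would pass to a subdivision in which $\|\Sigma\|\cap\|\star_\Delta(F_+)\|$ is a subcomplex and run the long exact sequence of the pair. The hypothesis guarantees that every simplex of $\|\star_\Delta(F_+)\|$ of dimension $\geq d-n$ has at least one vertex in $V(\Sigma)$; removing such a vertex deformation retracts that simplex onto a face of dimension $\leq d-n-1$. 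Assembling these local retractions compatibly collapses $\|\star_\Delta(F_+)\|-\|\Sigma\|$, and similarly $\|\del_{\star_\Delta(F_+)}(F_-)\|-\|\Sigma\|$, onto subcomplexes of dimension at most $d-n-1$, which forces $\tilde{H}^{d-n}(X,Y;k)=0$.

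The main obstacle is making the local-to-global argument in the sufficiency direction work uniformly in $F_+$ and $F_-$: the hypothesis only controls vertex counts, so a face $G$ may contain several vertices of $V(\Sigma)$ without any subset of them spanning a face of $\Sigma$, and one has to argue that the pieced-together retractions remain continuous across shared subfaces. This is the subtle step, and it should follow the same pattern as Theorem 3.2 above, extended by a downward induction on the codimension $i$ that matches the layered combinatorial condition for $1\leq i\leq n$.
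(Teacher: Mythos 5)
Your proposal does not prove the assigned statement. The statement is Theorem A itself, i.e.\ the multigraded isomorphism $H^{i}_J(k[\Delta])_{\ba}\cong \widetilde{H}^{i-1}(||\star_{\Delta}(F_{+})||-||\Sigma||,||\del_{\star_{\Delta}(F_{+})}(F_{-})||-||\Sigma||;k)$. Your very first step is ``apply Theorem A,'' and everything after that is an analysis of when the right-hand side of that formula vanishes. That is a sketch of Theorem 4.2 of the paper (the combinatorial characterization of the vanishing of $H^{d-n+1}_J(k[\Delta])$), not a proof of the isomorphism. As an argument for the assigned statement it is circular: you assume the conclusion and then derive consequences of it. For the record, the paper does not prove Theorem A either; it quotes it as Theorem 3.2 of Reiner--Welker--Yanagawa \cite{RWY}.

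An actual proof has to begin on the algebraic side, with no topological input assumed. One computes $H^i_J(k[\Delta])$ from a \v{C}ech (or generalized \v{C}ech/Taylor-type) complex built on the monomial generators of $J=I_\Sigma$ inside $k[\Delta]$; this complex is $\ZZ^n$-graded, so one may extract its degree-$\ba$ strand. The degree-$\ba$ piece of each localization $k[\Delta]_{x^G}$ is either $0$ or $k$, according to combinatorial conditions on $G$, $F_+=\supp_+\ba$, $F_-=\supp_-\ba$ and the faces of $\Delta$ and $\Sigma$; this is exactly where $\star_\Delta(F_+)$ and $\del_{\star_\Delta(F_+)}(F_-)$ appear. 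One then identifies the resulting strand with a (co)chain complex computing the relative reduced cohomology of the stated pair of spaces, with the degree shift $i\mapsto i-1$. None of these steps occurs in your proposal. If your intended target was Theorem 4.2, your outline is broadly parallel to the paper's Lemma 4.1 and Theorem 4.2 (and your worry about assembling the local retractions is legitimate---the paper handles it by elementary collapses on the barycentric subdivision $\Sd(\Delta-\Sigma)$ via Theorem B, not by deformation retractions of individual geometric simplices), but it should be presented as such and cannot stand in for a proof of Theorem A.
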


Let $\Sd (\Delta)$ denote the barycentric subdivision of the simplicial complexes $\Delta$, see Section 15 of \cite{Mu} and also section 9 of \cite{Bj} for abstract simplicial complexes. Given a subcomplex $\Sigma\subset \Delta$, $\Sd (\Delta-\Sigma)$ is the subcomplex of barycentric subdivision of $\Sd (\Delta)$ whose vertices are not the barycentre of any face of $\Sigma$. 

We also need the following result, Lemma 4.7.27 of \cite{BLSWZ}.

\begin{mt**}
Let $\Delta$ be a simplicial complex and let $\Delta'$ and $\Sigma$ be two subcomplexes. Then the pair of spaces $(||\Delta||-||\Sigma||,||\Delta'||-||\Sigma||)$ is relatively homotopy equivalent to the pair $(\Sd (\Delta-\Sigma),\Sd (\Delta'-\Sigma))$
\end{mt**}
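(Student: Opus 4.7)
The plan is to produce an explicit strong deformation retraction of the pair $(||\Delta||-||\Sigma||,\,||\Delta'||-||\Sigma||)$ onto the sub-pair $(||\Sd(\Delta-\Sigma)||,\,||\Sd(\Delta'-\Sigma)||)$; once built, this upgrades the inclusion of pairs to a homotopy equivalence of pairs, delivering the stated relative homotopy equivalence. The retraction is read off directly from the barycentric coordinates that $\Sd(\Delta)$ puts on $||\Delta||$. Every $x \in ||\Delta||$ lies in the relative interior of a unique simplex of $\Sd(\Delta)$, which corresponds to a flag $G_0 \subsetneq G_1 \subsetneq \cdots \subsetneq G_k$ of faces of $\Delta$, and is written uniquely as $x = \sum_{i=0}^{k} t_i \hat{G_i}$ with $t_i>0$ and $\sum_i t_i = 1$. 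The supporting face of $x$ in $\Delta$ is $G_k$, so the condition $x\notin ||\Sigma||$ is equivalent to $G_k\notin\Sigma$; and since $\Sigma$ is closed under taking subfaces, the indices with $G_i\in\Sigma$ form an initial segment $\{0,\dots,j\}$ with $j<k$.

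I then set
$$p(x) \;=\; \frac{1}{\sum_{i>j} t_i}\sum_{i=j+1}^{k} t_i\,\hat{G_i}, \qquad H(x,s) \;=\; (1-s)\,x + s\,p(x) \qquad (s\in[0,1]),$$
both of which take values in the closed simplex $[\hat{G_0},\ldots,\hat{G_k}]$ of $\Sd(\Delta)$. The verifications are then direct: for every $s<1$ the coefficient of $\hat{G_k}$ in $H(x,s)$ remains strictly positive, so the supporting face in $\Delta$ is still $G_k\notin\Sigma$ and the homotopy never leaves $||\Delta||-||\Sigma||$; at $s=1$ the image $p(x)$ is a convex combination of barycenters of faces outside $\Sigma$ forming a flag, hence lies in $||\Sd(\Delta-\Sigma)||$; and if $x$ already lies in $||\Sd(\Delta-\Sigma)||$ then $t_i=0$ for $i\le j$, so $p(x)=x$ and $H(x,\cdot)$ is constant, which is what makes the retraction strong. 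The restriction to the sub-pair is automatic: if $x\in ||\Delta'||$, the supporting face $G_k$ of $x$ lies in $\Delta'$, every face in the flag at $x$ therefore lies in $\Delta'$ as well, and so $p(x)\in ||\Sd(\Delta'-\Sigma)||$ and $H(x,s)\in ||\Delta'||-||\Sigma||$ throughout.

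The one step I expect to require care is continuity of $H$ across faces of simplices of $\Sd(\Delta)$, where the flag indexing $x$ shrinks as a coordinate collapses to zero. The remedy is to view $p$ and $H$ as being defined piecewise over the closed simplices of $\Sd(\Delta)$ intersected with $||\Delta||-||\Sigma||$: on each such piece the formulas are rational in the barycentric coordinates with pointwise positive denominator, hence continuous; and on a common face of two such simplices, the two local formulas agree after the coordinates that should vanish are dropped, since dropping a vanishing $t_i$ simply removes $\hat{G_i}$ from both the numerator and the denominator of $p$. The pieces therefore glue into a globally continuous homotopy on $||\Delta||-||\Sigma||$, which restricts on $||\Delta'||-||\Sigma||$ to a homotopy landing inside $||\Sd(\Delta'-\Sigma)||$, completing the argument.
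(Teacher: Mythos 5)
Your argument is correct: it is the standard strong-deformation-retraction proof, pushing each point of $||\Delta||-||\Sigma||$ along its barycentric coordinates away from the barycenters of faces of $\Sigma$ and onto the induced subcomplex $\Sd(\Delta-\Sigma)$, with the continuity issue across faces of $\Sd(\Delta)$ correctly identified and resolved. The paper itself gives no proof of this statement --- it imports it as Lemma 4.7.27 of \cite{BLSWZ} --- and your construction is essentially the argument given there, so there is nothing further to reconcile.
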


\section{main results}

Before proving our main result, we need the following Lemma. 

\begin{lemma}
For $\Sigma \subset \Delta$ and $\dim\Delta= d-1$. Suppose, every $(d-1)$-face of $\Delta$ contains a vertex of $\Sigma$ and its every $(d-2)$-face is a $(d-2)$-face of $\Sigma$ and also every $(d-2)$-facet of $\Delta$ contains a vertex of $\Sigma$, then we can remove every $(d-1)$-face and every $(d-2)$-face of $\Sd(\Delta-\Sigma)$. 
\end{lemma}

\begin{proof} 
Let $\Sd (\Delta-\Sigma)$ be the subcomplex of barycentric subdivision of $\Sd (\Delta)$ whose vertices are not the barycentre of any face of $\Sigma$.Denote $\Sd(\Delta-\Sigma)= X$ and we can use variable $z$ for $d-1, d-2$. At first, we assume the faces of $X$ are either a $d-1$-face or a $d-2$-facet. To remove a $z$-face of $X$, we adopt the way of elementary collapse \cite{Bj}, which states that, if we find a $z$-face which contains some $(z-1)$-face, but latter is not in any other $z$-face, we remove both of them. We claim that this process removes all those $z$-faces of $X$ as mentioned in the begining of the paragraph. 

If not, assume that after removing few $z$-faces, when we reach the simplicial complex $X'$ and there exists one $z$-face in $X'$ which can not be removed, i.e. there is a $z$-face in $X'$ such that all of its $(z-1)$-faces lying in some other $z$-face, so that no further collapse is possible. Let $\sigma$ be one of such $z$-faces in $X'$, and let $G$ be a $z$-face of $X$ in which it lies, so that the barycenter $b_G$ of $G$ is one of its vertex.
 .
Then $\lk_{X'}(b_G)$ is a subcomplex of $\lk_{\Sd (\Delta)}(b_G)$ which is the boundary of $\Sd G$ (since being a barycentre only for $F\subset \Sd G$, $\{b_G\}\cup F$ is a face of $\Sd (\Delta)$). Since $\sigma$ is in $X'$, $\dim (\lk_{X'}(b_G))= \dim (\lk_{\Sd (\Delta)}(b_G))$, and moreover any $(z-2)$-face $\tau$, $(z-1)$-face $\tau\cup \{b_G\}$ is in two $z$-faces (otherwise, we could remove the face $\sigma$). Thus we find $\lk_{X'}(b_G)= \lk_{\Sd (\Delta)}(b_G)$. But this is a contradiction, since $\lk_{\Sd (\Delta)}(b_G)$ should contain atleast one vertex of $\Sigma$, while $\lk_{X'}(b_G)$ should not contain any vertex of $\Sigma$.

Now consider those $d-2$-faces which are in some $d-1$-face. Now we can form $(d-2)$-face either by taking barycentre of the $(d-1)$-face along with other $(d-2)$ vertices which are from the collection of vertices of the $(d-1)$-face and barycentres of $(d-2)$-faces where there should be atleast one barycentre from a $(d-2)$-face (the situation that all the $(d-2)$ vertices are from the vertices of the $(d-1)$ face has been considered earlier in this proof) or by taking $(d-1)$ vertices from the collection of vertices of the $(d-1)$-face and barycentres of $(d-2)$-faces. Since every $(d-2)$-face is a $(d-2)$-face of $\Sigma$, the barycentre of each $(d-2)$-face is missing in $X$ and hence in each of the cases, similar situation arises as above. Thus we can remove all the $(d-2)$-faces. 
\end{proof}

\begin{rem}
The result of above lemma is true if we replace $\Delta$ by some of its subcomplex $\Omega$
\end{rem}

Now we state the main result of this paper.

\begin{thm}
Let $\Sigma \subset \Delta$ be simplicial complexes where $\dim k[\Delta]= d$ with $J=I_{\Sigma}$ (assume the situation and notation in section 2). If $H^{d-1}_J (k[\Delta])=0$ then 
every $(d-2)$-face contains one vertex of $\Sigma$. 
\end{thm}

\begin{proof}
For the forward direction, we assume that some $(d-2)$-face $F$ contains no vertex of $\Sigma$ and we want to prove that for some multidegree $\bold{a}$, $H^{d-1}_J (k[\Delta])_{\bold{a}}\neq 0$. We consider a multidegree $\bold{a}$ having $F_{+}=\supp_{+}\bold{a}= \phi$, $F_{-}=\supp_{-}\bold{a}= F$. From Theorem A given in Section 2, we get $H^{d-1}_J (k[\Delta])_{\bold{a}}= \widetilde{H}^{d-2}(||\Delta||-||\Sigma||,||\del_{\Delta}(F)||-||\Sigma||;k)$ where we use the fact that $\Star_{\Delta}(\phi)=\Delta$. Now if we remove $||\Delta||-||\Sigma||-||F||$, using "exision" we get $\widetilde{H}^{d-2}(||\Delta||-||\Sigma||,||\del_{\Delta}(F)||-||\Sigma||;k)= \widetilde{H}^{d-2}(||F||,||\partial F||;k)$. Finally, going to the quotient space $||F||/||\partial F||$, we get $\widetilde{H}^{d-2}(||F||,||\partial F||;k)= \widetilde{H}^{d-2}(||F||/||\partial F||;k)= \widetilde{H}^{d-2}(\SS^{d-2};k)=k$. Thus $H^{d-1}_J (k[\Delta])_{\bold{a}}\neq 0$.
\end{proof}

\begin{cor}
If $H^{d-1}_J (k[\Delta])=0$ then $H^{d}_J (k[\Delta])=0$
\end{cor}

\begin{proof}
Since every $(d-1)$ face contains a $(d-2)$ face, the result follows from above theorem and Theorem 3.5 of \cite{RWY}
\end{proof}

Necessary condition for the vanishing of $H^{d-1}_J (k[\Delta])$ is not a sufficient condition of it. We cite an example.

\begin{example}
If we consider $\Delta= 2^{\{a,b,c,d\}}-\{a,b,c,d\}$ and $\Sigma= 2^{\{a,b,c\}}\cup\{d\}$, then one can find that $H^{2}_J (k[\Delta])\neq 0$, but every 1 face of $\Delta$ contains a vertex of $\Sigma$.
\end{example} 

So for the other direction, we have the following.

\begin{thm}
Let $\Sigma \subset \Delta$ be simplicial complexes where $\dim k[\Delta]= d$ with $J=I_{\Sigma}$ (assume the situation and notation in section 2). If every $(d-1)$-face of $\Delta$ contains a vertex of $\Sigma$ and its every $(d-2)$-face is a $(d-2)$ face of $\Sigma$ and also every $(d-2)$-facet of $\Delta$ contains one vertex of $\Sigma$, then $H^{d-1}_J (k[\Delta])=0$. 
\end{thm}

\begin{proof}
For the other direction, we assume that every $(d-1)$-face contains two vertices of $\Sigma$ and every $(d-2)$-face contains one vertex of $\Sigma$ and using Theorem A of Section 2, we need to show $H^{d-1}_J (k[\Delta])_{\bold{a}}= \widetilde{H}^{d-2}(||\Star_{\Delta}(F_{+})||-||\Sigma||,||\del_{\Star_{\Delta}(F_{+})}(F_{-})||-||\Sigma||;k)= 0$ with $F_{+} \cup F_{-}\in \Delta$. Since $\Star_{\Delta}(F_{+})$ can be of dimension atmost that of $\Delta$, we can assume $F_{+}=\phi$ and above reduces that we only need to show $\widetilde{H}^{d-2}(||\Delta||-||\Sigma||,||\del_{\Delta}(F)||-||\Sigma||;k)= 0$. 

Now instead of taking simplicial homology for the pair of spaces $(||\Delta||-||\Sigma||,||\del_{\Delta}(F)||-||\Sigma||)$, by Lemma 4.7.27 of \cite{BLSWZ}, we can take the pair of spaces $(\Sd (\Delta-\Sigma),\Sd (\del_{\Delta}(F)-\Sigma))$, where $\Sd (\Delta-\Sigma)$ is the subcomplex of barycentric subdivision of $\Sd (\Delta)$ whose vertices are not the barycentre of any face of $\Sigma$. Using Theorem B of Section 2, we get that $\widetilde{H}^{d-2}(||\Delta||-||\Sigma||,||\del_{\Delta}(F)||-||\Sigma||;k)= \widetilde{H}^{d-2}(\Sd (\Delta-\Sigma),\Sd (\del_{\Delta}(F)-\Sigma))$. 

Now from above Lemma 3.1 and Remark 1, we have shown that we can remove every $(d-1)$-face and $(d-2)$-face of $\Sd (\Delta-\Sigma)$ along with those of $\Sd (\del_{\Delta}(F)-\Sigma)$ and this leads to the desired result.
\end{proof}

{\textbf{Acknowledgement:}}\newline

For example 3.1, I would like to thank the referee of the journal where I submitted this work previously.








\end{document}